%-*- program: lualatex -*-
%-*- encoding: utf-8 -*-
\documentclass[leqno]{amsart}
\usepackage[normal]{boilerart}
\usepackage{mathrsfs}
\usepackage{tikz-cd}

\newcommand{\angs}[1]{\langle #1 \rangle}
\newcommand{\twa}{\widetilde{\mathscr{O}}}

\hyphenation{mon-oid-al}

%-------------------------------------------------------------------%
%  Title data                                                       %
%-------------------------------------------------------------------%

\title{On the fibrewise effective Burnside $\infty$-category}

\author{Clark Barwick}
\address{Department of Mathematics, Massachusetts Institute of Technology, 77 Massachusetts Avenue, Cambridge, MA 02139-4307, USA}
\email{clarkbar@math.mit.edu}

\author{Saul Glasman}
\address{Department of Mathematics, Massachusetts Institute of Technology, 77 Massachusetts Avenue, Cambridge, MA 02139-4307, USA}
\email{sglasman@math.mit.edu}

%-------------------------------------------------------------------%
%-------------------------------------------------------------------%
%-------------------------------------------------------------------%
%-------------------------------------------------------------------%

\begin{document}

\begin{abstract} Effective Burnside $\infty$-categories, introduced in \cite{M1}, are the centerpiece of the $\infty$-categorical approach to equivariant stable homotopy theory. In this \'etude, we recall the construction of the twisted arrow $\infty$-category, and we give a new proof that it is an $\infty$-category, using an extremely helpful modification of an argument due to Joyal--Tierney \cite{JT}. The twisted arrow $\infty$-category is in turn used to construct the effective Burnside $\infty$-category. We employ a variation on this theme to construct a fibrewise effective Burnside $\infty$-category. To show that this constuctionworks fibrewise, we introduce a fragment of a theory of what we call \emph{marbled simplicial sets}, and we use a yet further modified form of the Joyal--Tierney argument.
\end{abstract}

\maketitle

\section{The twisted arrow $\infty$-category} There are three basic endofunctors of the simplex category $\Delta$: the identity $\id$, the opposite $\op$ (which simply reverses the ordering on the objects), and the constant functor $\kappa$ at $\mathbf{[0]}$. There is also the associative \emph{join} or \emph{concatenation} operation $\star\colon\fromto{\Delta\times\Delta}{\Delta}$, so that $\mathbf{[m]}\star\mathbf{[n]}=\mathbf{[m+n+1]}$. This join operation gives rise to a semigroup structure $\star$ on the set $\End(\Delta)$ of endomorphisms, so that $(f\star g)\mathbf{[m]}=f(\mathbf{[m]})\star g(\mathbf{[m]})$. Velcheva \cite{} shows that the semigroup $\End(\Delta)$ is freely generated by $\id$, $\op$, and $\kappa$.

Of particular import to us will be the endofunctor $\varepsilon\coloneq\op\star\id$. This induces a functor $\widetilde{\mathscr{O}}\coloneq\varepsilon^{\star}\colon\fromto{s\Set}{s\Set}$, so that
\[\widetilde{\mathscr{O}}(X)_n=X(\mathbf{[n]}^{\op}\star\mathbf{[n]})=X_{2n+1}.\]
This functor is (a twisted form of) the \emph{edgewise subdivision.}

Lurie proved the following in \cite[Pr. 4.2.3]{DAGX}, but, as a way of introducing the basic tools we will use here, we shall give our own, appreciably simpler, proof. 

\begin{prp}\label{prp:twarriscat} For any $\infty$-category $C$, the functor
\[\fromto{\widetilde{\mathscr{O}}(C)}{C^{\op}\times C}\]
induced by the inclusions $\into{\op}{\op\star\id}$ and $\into{\id}{\op\star\id}$ is a left fibration. In particular, $\widetilde{\mathscr{O}}(C)$ is an $\infty$-category.
\end{prp}

\noindent The idea of our argument is to adapt an argument introduced by Joyal--Tierney \cite{JT}. Here is the key notion.

\begin{dfn} A class of monomorphisms $E$ in an ordinary category \emph{satisfies the right cancellation property} if for any monomorphisms $u\colon\fromto{x}{y}$ and $v\colon\fromto{y}{z}$, if $v\circ u$ and $u$ both lie in $E$, then so does $v$.
\end{dfn}

\begin{exm} Observe that in any model category in which the cofibrations are precisely the monomorphisms, the trivial cofibrations satisfy the right cancellation property.
\end{exm}

\begin{rec} Let 
\[s_n\colon\into{I^n\coloneq\Delta^{\{0,1\}} \cup^{\Delta^{\{1\}}} \cdots \cup^{\Delta^{\{n-1\}}} \Delta^{\{n-1,n\}}}{\Delta^n}\]
be the inclusion of the spine of the $n$-simplex; this is of course inner anodyne. More generally, if $K=\{a_0,\dots,a_k\}$ is a nonempty totally ordered finite set, then write
\[I^K\coloneq\Delta^{\{a_0,a_1\}} \cup^{\Delta^{\{a_1\}}} \cdots \cup^{\Delta^{\{a_{k-1}\}}} \Delta^{\{a_{k-1},a_k\}}\subset\Delta^K.\]
\end{rec}

The maps $s_n$ also \emph{determine} the class of inner anodyne maps the following sense:
\begin{lem}[Joyal--Tierney, \protect{\cite[Lm. 3.5]{JT}}]\label{lem:JT35} A saturated class of monomorphisms of simplicial sets that satisfies the right cancellation property contains the inner anodyne maps if and only if it contains the spine inclusions $s_n$ for $n\geq 2$.
\end{lem}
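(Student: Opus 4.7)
The ``only if'' direction is essentially formal, since each spine inclusion $s_n$ is itself inner anodyne; any saturated class containing all inner anodyne maps contains every $s_n$ automatically. The content of the lemma is concentrated in the reverse implication. For that direction, assume $E$ is saturated, satisfies right cancellation, and contains every $s_n$ for $n \geq 2$. Since the inner anodyne maps are by definition the saturation of the inner horn inclusions $\{\Lambda^n_k \hookrightarrow \Delta^n : n \geq 2,\ 0 < k < n\}$, it suffices to show that each such horn inclusion lies in $E$.

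The key observation is that when $0 < k < n$, the spine $I^n$ is already contained in $\Lambda^n_k$, so $s_n$ factors as
\[I^n \hookrightarrow \Lambda^n_k \hookrightarrow \Delta^n,\]
and since the composite lies in $E$, right cancellation will reduce the problem to showing that $I^n \hookrightarrow \Lambda^n_k$ lies in $E$ for every admissible $(n, k)$. I would prove this by induction on $n$: the base case $n = 2$ is trivial because $I^2 = \Lambda^2_1$, and for the inductive step I would filter $\Lambda^n_k$, beginning from $I^n$, by attaching the codimension-one faces $\Delta^{[n] \setminus \{i\}}$ (for $i \in [n] \setminus \{k\}$) one at a time in a carefully chosen order --- starting, for instance, with $i = 0$, then $i = n$, and working inward. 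Each such attachment is a pushout along the inclusion into $\Delta^{[n] \setminus \{i\}}$ of its intersection with the previous stage of the filtration; this intersection is assembled from a spine $I^{[n] \setminus \{i\}}$ together with some already-attached faces, and I would verify case by case that its inclusion into $\Delta^{[n] \setminus \{i\}}$ itself admits a sub-filtration whose attachments are pushouts of spine inclusions and of inner horn inclusions of dimension at most $n - 1$. The former belong to $E$ by hypothesis, and the latter belong to $E$ by the inductive hypothesis on $I^m \hookrightarrow \Lambda^m_j$ for $m < n$, combined with the right-cancellation reduction of the previous paragraph applied at dimension $n - 1$.

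The principal obstacle is the combinatorial bookkeeping in this filtration: choosing an ordering of the face attachments so that each intersection is tractable, and verifying that each intermediate attaching inclusion really does decompose through pushouts into spine inclusions and inner horn inclusions of strictly smaller simplices. Right cancellation is what makes the whole scheme go: it is the mechanism that promotes the spine hypothesis, via the inductive hypothesis on $I^m \hookrightarrow \Lambda^m_j$, into control over every inner horn inclusion in dimensions below $n$, which is exactly what the inductive filtration at dimension $n$ demands.
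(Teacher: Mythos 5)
Your setup is right, and the right-cancellation reduction is exactly the standard one: since $I^n\subseteq\Lambda^n_k$ for $0<k<n$, and $s_n\in E$, it suffices to show that $[\into{I^n}{\Lambda^n_k}]\in E$, and the ``only if'' direction is indeed formal. But the heart of the lemma is the claim you defer --- that each face-attachment in your filtration of $\Lambda^n_k$ is a pushout along a map already known to be in $E$ --- and the inductive hypothesis you propose to carry ($[\into{I^m}{\Lambda^m_j}]\in E$ for single inner horns with $m<n$) is not strong enough to close the induction. After the first one or two faces, the intersection of the next face $\Delta^{\hat a}$ with the previously attached stage is not a single inner horn of $\Delta^{\hat a}$: it is the spine of $\Delta^{\hat a}$ together with a \emph{generalized} horn, i.e.\ a union of several codimension-one faces of $\Delta^{\hat a}$ (one for each face already attached). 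Your hypothesis says nothing about such unions, and ``verify case by case'' is precisely where the proof lives.

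The missing idea is to strengthen the statement before inducting: for a subset $S$, let $\Lambda^n_S=\bigcup_{s\notin S}\Delta^{\hat s}$ be the union of the faces containing $\Delta^S$, and prove that $[\into{I^n}{\Lambda^n_S}]\in E$ for all admissible $S$, by a double induction on $n$ and on $n-|S|$. Then passing from $\Lambda^n_{S\cup\{a\}}$ to $\Lambda^n_S$ attaches a single face $\Delta^{\hat a}$, and its intersection with $\Lambda^n_{S\cup\{a\}}$ is itself a generalized horn of the $(n-1)$-simplex $\Delta^{\hat a}$, so the inner induction genuinely closes. This is Joyal--Tierney's argument, and it is spelled out in this paper in the proof of Lm.~\ref{lem:JT35leftfibs}, which explicitly ``proves something slightly more general'' in exactly this way (there with $J^n$ in place of $I^n$ and $0\in S$). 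Without the generalized-horn strengthening your outline does not constitute a proof; with it, your filtration becomes the two-step induction of the cited argument.
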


For the proof of Pr. \ref{prp:twarriscat}, we will need a version of this statement that is suitable for left fibrations.

\begin{lem}\label{lem:JT35leftfibs} A saturated class of monomorphisms of simplicial sets that satisfies the right cancellation property contains the left anodyne maps if and only if it contains the spine inclusions $s_n$ for $n\geq 2$ as well as the horn inclusions
\[i_1\colon\into{\Lambda_0^1}{\Delta^1}\text{\quad and\quad}i_2\colon\into{\Lambda_0^2}{\Delta^2}.\]
\begin{proof} Suppose $E$ is such a class. Let $J^n$ denote the union of edges in $\Delta^n$
\[\Delta^{\{0,1\}}\cup\Delta^{\{0, 2\}}\cup \bigcup_{i = 2}^{n - 1}\Delta^{\{i, i + 1\}}.\]
First we claim that the inclusion $J^n \to \Delta^n$ belongs to $E$. Indeed, the inclusion
\[\into{J^n}{\Delta^2 \cup^{\Delta^{\{2\}}} \Delta^{\{2,\dots,n\}}}\]
is clearly in $E$, as are the inclusions
\[\into{I^n  \cup^{\Delta^{\{0, 1\}}} \Delta^{\{0, 1\}}}{\Delta^2 \cup^{\Delta^{\{2\}}}\Delta^{\{2,\dots,n\}}}\]
and
\[\into{I^n  \cup^{\Delta^{\{0, 1\}}} \Delta^{\{0, 1\}}}{\Delta^n},\]
which proves the claim.

The remaining necessity is that the inclusion
\[\into{J^n}{\Lambda^n_0}\]
lie in $E$. Following the proof of Lm. \ref{lem:JT35}, we'll prove something slightly more general. Write $\Delta^{\hat{s}}$ for the face $\Delta^{\{0,1,\dots,s-1,s+1,\dots,n\}}$ of $\Delta^n$ opposite $s$, and for any subset $S\subset\{0,\dots,n\}$, write
\[\Lambda^n_S\coloneq\bigcup_{s\notin S}\Delta^{\hat{s}}.\]
(Equivalently, $\Lambda^n_S$ is the union of the faces of $\Delta^n$ that contain the simplex $\Delta^S$.) We shall now show that the inclusion
\[\into{J^n}{\Lambda^n_S} \]
is in $E$ for any $S$ with
\[\{0\} \subseteq S \subsetneqq \{0, 2, 3, \cdots, n\}. \]
This prescription on $S$ implies that $\Delta^{\{0, 1\}}$ is an edge of $\Lambda^n_S$, so this definition makes sense. We'll use induction on both $n$ and $n - |S|$. Of course, the statement is vacuous if $n = 1$. Suppose that $n - |S| = 1$, which is the least possible value, so that $S = \{0, 2, \cdots n\} \setminus \{a\}$ for some $a$ with $2 \leq a \leq n$. Then 
\[[\into{J^n}{\Delta^{\{0, 1\}} \cup \Delta^{\hat{1}}}] \in E,\]
and since 
\[\Delta^{\hat{a}} \cap (\Delta^{\{0, 1\}} \cup \Delta^{\hat{1}}) = \Delta^{\{0, 1\}} \cup (\Delta^{\hat{1}} \cap \Delta^{\hat{a}}),\]
we see that $[\into{J^n}{\Lambda^n_S}] \in E$ in this case.

In general, choose some $a \notin S$ with $a \neq 1$. Then we're reduced to showing that
\[[\into{\Lambda^n_{S \cup \{a\}}}{\Lambda^n_S}] \in E,\]
which we'll naturally accomplish by showing that 
\[[\into{(\Delta^{\hat{a}} \cap \Lambda^n_{S \cup \{a\}}}{\Delta^{\hat{a}})}] \in E.\]
But since $\{0\} \subseteq S \subsetneqq (\{0, 2, 3, \cdots n\} - \{a\})$, this follows from the induction hypothesis.
\end{proof}
\end{lem}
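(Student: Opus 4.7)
The plan is to separate the biconditional into its trivial and substantive halves: necessity is immediate from the known structure of left anodyne maps, while sufficiency reduces, via Lemma \ref{lem:JT35}, to verifying that the outer left horn inclusions $\Lambda^n_0 \hookrightarrow \Delta^n$ lie in $E$ for all $n \geq 3$, which I would attack by a right cancellation argument through a cleverly chosen intermediate subcomplex.

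For necessity, spine inclusions are inner anodyne hence left anodyne, and $i_1, i_2$ are themselves left horn inclusions. For sufficiency, suppose $E$ satisfies the hypotheses. Then Lemma \ref{lem:JT35} applies verbatim and shows $E$ contains every inner anodyne map. Since the saturated class of left anodyne maps is generated by the inner horn inclusions together with the outer left horn inclusions $\Lambda^n_0 \hookrightarrow \Delta^n$ for $n \geq 1$, and the cases $n = 1, 2$ are given by hypothesis, it remains only to show $\Lambda^n_0 \hookrightarrow \Delta^n \in E$ for each $n \geq 3$.

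My approach is to interpolate a subcomplex $K_n \subset \Lambda^n_0$ chosen so that both $K_n \hookrightarrow \Delta^n$ and $K_n \hookrightarrow \Lambda^n_0$ can be shown to lie in $E$; right cancellation then gives the desired inclusion. The natural candidate is the augmented spine $K_n = \Delta^{\{0,1\}} \cup \Delta^{\{0,2\}} \cup \bigcup_{i=2}^{n-1} \Delta^{\{i, i+1\}}$, i.e.\ the ordinary spine with its edge $\Delta^{\{1,2\}}$ replaced by the long edge $\Delta^{\{0,2\}}$. Verifying $K_n \hookrightarrow \Delta^n \in E$ should be straightforward: filling $\Lambda^2_0$ up to $\Delta^{\{0,1,2\}}$ is a pushout of $i_2$, filling the trailing edges up to $\Delta^{\{2,\ldots,n\}}$ is a pushout of a spine inclusion, and the resulting inclusion $\Delta^{\{0,1,2\}} \cup_{\Delta^{\{2\}}} \Delta^{\{2,\ldots,n\}} \hookrightarrow \Delta^n$ is a standard Segal-type inner anodyne map, hence already in $E$ by the previous paragraph.

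I expect the main obstacle to be showing that $K_n \hookrightarrow \Lambda^n_0$ lies in $E$. The natural strategy is to attach the missing codimension-one faces $\Delta^{\hat s}$ of $\Lambda^n_0$ (for $s \neq 0$) one at a time, using right cancellation at each stage. To make the induction close, I would prove the strengthened statement that $K_n \hookrightarrow \Lambda^n_S$ lies in $E$ for every $S$ with $\{0\} \subseteq S \subsetneq \{0, 2, 3, \ldots, n\}$, where $\Lambda^n_S$ denotes the union of those faces $\Delta^{\hat s}$ with $s \notin S$, and run a double induction on $n$ and on $n - |S|$. The technical heart is then a careful computation of the intersection $\Delta^{\hat a} \cap \Lambda^n_{S \cup \{a\}}$ at each inductive step, so that peeling off a single face reduces to an instance with smaller $n$ or smaller $n - |S|$. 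Once this parametrized statement is established, the lemma follows from a single application of right cancellation to $K_n \hookrightarrow \Lambda^n_0 \hookrightarrow \Delta^n$.
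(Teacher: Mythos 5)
Your proposal follows essentially the same route as the paper's proof: your interpolating subcomplex $K_n$ is exactly the paper's $J^n$, your two-step verification that $K_n\hookrightarrow\Delta^n$ lies in $E$ (pushout of $i_2$, pushout of a spine inclusion, then a Segal-type inner anodyne map) matches the paper's factorization through $\Delta^2\cup^{\Delta^{\{2\}}}\Delta^{\{2,\dots,n\}}$, and your strengthened statement about $K_n\hookrightarrow\Lambda^n_S$ for $\{0\}\subseteq S\subsetneq\{0,2,\dots,n\}$ with double induction on $n$ and $n-|S|$ is precisely the paper's generalized-horn argument. The only parts you defer --- the base case $n-|S|=1$ and the intersection computation $\Delta^{\hat a}\cap\Lambda^n_{S\cup\{a\}}$ --- are exactly the steps the paper carries out, and they go through as you anticipate.
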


\begin{proof}[Proof of Pr. \ref{prp:twarriscat}] Write $\varepsilon_!$ for the left Kan extension $\fromto{s\Set}{s\Set}$ along $\varepsilon$. This is left adjoint to $\varepsilon^{\star}$. Now consider the class $E$ of monomorphisms $f\colon\fromto{X}{Y}$ of simplicial sets such that the map
\[\fromto{\varepsilon_!(X)\cup^{X^{\op}\sqcup X}(Y^{\op}\sqcup Y)}{\varepsilon_!(Y)}\]
is a trivial cofibration for the Joyal model structure. It's easy to see that $E$ is a saturated class that satisfies the right cancellation property. Furthermore, by adjunction, it's clear that any morphism of $E$ has the left lifting property with respect to $\fromto{\widetilde{\mathscr{O}}(C)}{C^{\op}\times C}$. Consequently, Lm. \ref{lem:JT35leftfibs} implies that we need only to show that the spine inclusions $s_n$ and the horn inclusion $i_2$ all lie in $E$.

If $n\geq 2$, write $\{\overline{n},\overline{n-1},\dots,\overline{0}\}$ for the poset $\categ{[n]}^{\op}$. Observe that the monomorphism
\[\fromto{\varepsilon_!(I^n)\cup^{I^{n,\op}\sqcup I^n}(\Delta^{n,\op}\sqcup\Delta^n)}{\varepsilon_!(\Delta^n)}\]
is isomorphic to the inclusion of the iterated union
\[
U\coloneq(\cdots((\Delta^{\{\overline{n},\overline{n-1},\dots,\overline{0}\}}\cup^{\Delta^{\{\overline{0},0\}}}\Delta^{\{0,\dots,n-1,n\}})\cup^{I^{\{\overline{1},\overline{0},0,1\}}}\Delta^{\{\overline{1},\overline{0},0,1\}})\cup^{I^{\{\overline{2},\overline{1},1,2\}}}\cdots)\cup^{I^{\{\overline{n},\overline{n-1},n-1,n\}}}\Delta^{\{\overline{n},\overline{n-1},n-1,n\}}
\]
into $\Delta^{\{\overline{n},\overline{n-1},\dots,\overline{0},0,\dots,n-1,n\}}$. It's a simple matter to see that the inclusion
\[\into{\Delta^{\{\overline{n},\overline{n-1},\dots,\overline{0}\}}\cup^{\Delta^{\{\overline{0},0\}}}\Delta^{\{0,\dots,n-1,n\}}}{\Delta^{\{\overline{n},\overline{n-1},\dots,\overline{0},0,\dots,n-1,n\}}}\]
is inner anodyne, and the inclusion
\[\into{\Delta^{\{\overline{n},\overline{n-1},\dots,\overline{0}\}}\cup^{\Delta^{\{\overline{0},0\}}}\Delta^{\{0,\dots,n-1,n\}}}{U}\]
is clearly an iterated pushout of inner anodyne maps, so the right cancellation property implies that $\into{U}{\Delta^{\{\overline{n},\overline{n-1},\dots,\overline{0},0,\dots,n-1,n\}}}$ is a trivial cofibration for the Joyal model structure, whence $s_n$ lies in $E$.

It remains to show that the horn inclusions $i_1$ and $i_2$ lie in $E$. First, note that the monomorphism
\[\fromto{\varepsilon_!(\Lambda^1_0)\cup^{(\Lambda^1_0)^{\op}\sqcup\Lambda^1_0}(\Delta^{1,\op}\sqcup\Delta^1)}{\varepsilon_!(\Delta^1)}\]
is isomorphic to the spine inclusion $s_3\colon\fromto{I^3}{\Delta^3}$, which is clearly inner anodyne; hence $i_1$ lies in $E$. Observe also that the monomorphism
\[\fromto{\varepsilon_!(\Lambda^2_0)\cup^{(\Lambda^2_0)^{\op}\sqcup\Lambda^2_0}(\Delta^{2,\op}\sqcup\Delta^2)}{\varepsilon_!(\Delta^2)}\]
is isomorphic to the inclusion of the union
\[V\coloneq\Delta^{2,\op}\cup^{(\Lambda^2_0)^{\op}}(\Delta^{\{\overline{2},\overline{0},0,2\}}\cup^{\Delta^{\{\overline{0},0\}}}\Delta^{\{\overline{1},\overline{0},0,1\}})\cup^{\Lambda^2_0}\Delta^2\]
into $\Delta^{\{\overline{2},\overline{1},\overline{0},0,1,2\}}$. The simplical set $V$ contains the spine $I^{\{\overline{2},\overline{1},\overline{0},0,1,2\}}$, and it's a simple matter to see that the inclusion $\into{I^{\{\overline{2},\overline{1},\overline{0},0,1,2\}}}{V}$ is inner anodyne; hence by the right cancellation property, we deduce that $\into{V}{\Delta^{\{\overline{2},\overline{1},\overline{0},0,1,2\}}}$ is a trivial cofibration for the Joyal model structure. It thus follows that $i_2$ lies in $E$.
\end{proof}

We call $\widetilde{\mathscr{O}}(X)$ the \emph{twisted arrow $\infty$-category of $X$}. We justify this language by noting that if $X$ is a $1$-category, then $\widetilde{\mathscr{O}}(X)$ is a $1$-category as well, and it agrees with the classical, $1$-categorical twisted arrow category.

\section{The effective Burnside $\infty$-category} The functor $\varepsilon$ also induces a functor $\varepsilon_{\star}\colon\fromto{s\Set}{s\Set}$, which is right adjoint to $\varepsilon^{\star}$. Consequently, for any simplicial set $X$,
\[(\varepsilon_{\star}X)_n\cong\Mor(\widetilde{\mathscr{O}}(\Delta^n),X)\]

\begin{dfn} If $C$ admits all pullbacks, then we define the \emph{effective Burnside $\infty$-category} of $C$ is the simplicial subset
\[A^{\eff}(C)\subset(\varepsilon_{\star}(C^{\op}))^{\op}\]
whose $n$-simplices are those functors $X\colon\fromto{\widetilde{\mathscr{O}}(\Delta^n)^{\op}}{C}$ such that for any integers $0\leq i\leq k\leq l\leq j\leq n$, the square
\begin{equation*}
\begin{tikzpicture} 
\matrix(m)[matrix of math nodes, 
row sep=4ex, column sep=4ex, 
text height=1.5ex, text depth=0.25ex] 
{X_{ij}&X_{kj}\\ 
X_{il}&X_{kl}\\}; 
\path[>=stealth,->,font=\scriptsize] 
(m-1-1) edge (m-1-2) 
edge (m-2-1) 
(m-1-2) edge (m-2-2) 
(m-2-1) edge (m-2-2); 
\end{tikzpicture}
\end{equation*}
is a pullback.
\end{dfn}

The name is justified by the following result.
\begin{prp}[\protect{\cite[Pr. 5.6]{M1}}] If $C$ is an $\infty$-category that admits all pullbacks, then $A^{\eff}(C)$ is an $\infty$-category.
\end{prp}

\noindent We will generalize this result by providing a fibrewise effective Burnside construction in the next section. But first, let us discuss a form of the effective Burnside $\infty$-category in which the maps that appear are from certain chosen classes.

\begin{dfn} A \emph{triple} $(C,C_{\dag},C^{\dag})$ of $\infty$-categories consists of an $\infty$-category $C$ and two subcategories $C_{\dag}\subset C$ and $C^{\dag}\subset C$, each of which contains all the equivalences.\footnote{Recall \cite[\S 1.2.11]{HTT} that subcategories determine and are determined by subcategories of their homotopy categories.} The morphisms of $C_{\dag}$ are called \emph{ingressive}, and the morphisms of $C^{\dag}$ are called \emph{egressive}.

A triple $(C,C_{\dag},C^{\dag})$ is said to be \emph{adequate} if, for any ingressive morphism $\cofto{Y}{X}$ and any egressive morphism $\fibto{X'}{X}$, there exists a pullback square
\begin{equation*}
\begin{tikzpicture} 
\matrix(m)[matrix of math nodes, 
row sep=4ex, column sep=4ex, 
text height=1.5ex, text depth=0.25ex] 
{Y'&X'\\ 
Y&X\\}; 
\path[>=stealth,->,font=\scriptsize] 
(m-1-1) edge (m-1-2) 
edge (m-2-1) 
(m-1-2) edge[->>] (m-2-2) 
(m-2-1) edge[>->] (m-2-2); 
\end{tikzpicture}
\end{equation*}
in which $\fromto{Y'}{X'}$ is ingressive, and $\fromto{Y'}{Y}$ is egressive. (Such a square will be called \emph{ambigressive}.)

The \emph{effective Burnside $\infty$-category} of an adequate triple $(C,C_{\dag},C^{\dag})$ is the simplicial subset
\[A^{\eff}(C,C_{\dag},C^{\dag})\subset(\varepsilon_{\star}(C^{\op}))^{\op}\]
whose $n$-simplices are those functors $X\colon\fromto{\widetilde{\mathscr{O}}(\Delta^n)^{\op}}{C}$ such that for any integers $0\leq i\leq k\leq l\leq j\leq n$, the square
\begin{equation*}
\begin{tikzpicture} 
\matrix(m)[matrix of math nodes, 
row sep=4ex, column sep=4ex, 
text height=1.5ex, text depth=0.25ex] 
{X_{ij}&X_{kj}\\ 
X_{il}&X_{kl}\\}; 
\path[>=stealth,->,font=\scriptsize] 
(m-1-1) edge[>->] (m-1-2) 
edge[->>] (m-2-1) 
(m-1-2) edge[->>] (m-2-2) 
(m-2-1) edge[>->] (m-2-2); 
\end{tikzpicture}
\end{equation*}
is an ambigressive pullback.
\end{dfn}

\begin{thm}[\protect{\cite[Th. 12.2]{M1}}]\label{thm:pain} Suppose $(C,C_{\dag},C^{\dag})$ and $(D,D_{\dag},D^{\dag})$ adequate triples, and suppose $p\colon\fromto{C}{D}$ an inner fibration that preserves ingressive morphisms, egressive morphisms, and ambigressive pullbacks. Then the induced functor
\[A^{\eff}(p)\colon\fromto{A^{\eff}(C,C_{\dag},C^{\dag})}{A^{\eff}(D,D_{\dag},D^{\dag})}\]
is an inner fibration as well. Furthermore, assume the following.
\begin{enumerate}[(\ref{thm:pain}.1)]
\item For any ingressive morphism $g\colon\cofto{s}{t}$ of $D$ and any object $x\in C_s$, there exists an ingressive morphism $f\colon\cofto{x}{y}$ of $C$ covering $g$ that is both $p$-cocartesian and $p_{\dag}$-cocartesian.
\item Suppose $\sigma$ a commutative square
\begin{equation*}
\begin{tikzpicture} 
\matrix(m)[matrix of math nodes, 
row sep=4ex, column sep=4ex, 
text height=1.5ex, text depth=0.25ex] 
{x'&y'\\ 
x&y,\\}; 
\path[>=stealth,->,font=\scriptsize] 
(m-1-1) edge[>->] node[above]{$f'$} (m-1-2) 
edge[->>] node[left]{$\phi$} (m-2-1) 
(m-1-2) edge[->] node[right]{$\psi$} (m-2-2) 
(m-2-1) edge[>->] node[below]{$f$} (m-2-2); 
\end{tikzpicture}
\end{equation*}
of $C$ such that the square $p(\sigma)$ is an ambigressive pullback in $D$, the morphism $f'$ is ingressive, the morphism $\phi$ is egressive, and the morphism $f$ is $p$-cocartesian. Then $f'$ is $p$-cocartesian if and only if the square is an ambigressive pullback (and in particular $\psi$ is egressive).
\end{enumerate}
Then an edge $f\colon\fromto{y}{z}$ of $A^{\eff}(C,C_{\dag},C^{\dag})$ is $A^{\eff}(p)$-cocartesian if it is represented as a span
\begin{equation*}
\begin{tikzpicture}[baseline]
\matrix(m)[matrix of math nodes, 
row sep=3ex, column sep=3ex, 
text height=1.5ex, text depth=0.25ex] 
{&u&\\ 
y&&z,\\}; 
\path[>=stealth,->,font=\scriptsize,inner sep=1.5pt] 
(m-1-2) edge[->>] node[above left]{$\phi$} (m-2-1) 
edge[>->] node[above right]{$\psi$} (m-2-3); 
\end{tikzpicture}
\end{equation*}
in which $\phi$ is egressive and $p$-cartesian and $\psi$ is ingressive and $p$-cocartesian.
\end{thm}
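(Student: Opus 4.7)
The plan is to treat both claims as simplicial lifting problems against $A^{\eff}(p)$ and to translate them, through the adjunction $\varepsilon_!\dashv\varepsilon^{\star}$, into problems about extending functors $\widetilde{\mathscr{O}}(\Delta^n)^{\op}\to C$ subject to the ambigressive-pullback condition — precisely the strategy of Pr. \ref{prp:twarriscat}. Throughout, one exploits that an $n$-simplex of $A^{\eff}(C,C_{\dag},C^{\dag})$ is exactly a functor $X\colon\widetilde{\mathscr{O}}(\Delta^n)^{\op}\to C$ sending the structural morphisms to ingressives and egressives and every sub-square to an ambigressive pullback, and similarly for $D$.

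For the inner fibration claim, consider an inner horn $\Lambda^n_k\hookrightarrow\Delta^n$ with $0<k<n$, a map $\Lambda^n_k\to A^{\eff}(C,C_{\dag},C^{\dag})$, and a compatible lift $\Delta^n\to A^{\eff}(D,D_{\dag},D^{\dag})$. I would induct on the dimension of cells of $\widetilde{\mathscr{O}}(\Delta^n)^{\op}$ missing from $\widetilde{\mathscr{O}}(\Lambda^n_k)^{\op}$. Each missing vertex $X_{ij}$ is constructed as an ambigressive pullback of the appropriate pair of already-filled objects; such pullbacks exist by adequacy of the triple and are preserved by $p$, which gives compatibility with the downstairs diagram. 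Each missing higher cell is then filled by reducing to an inner-horn lifting problem for $p$ itself, which is solvable because $p$ is an inner fibration and the squares involved are known to be ambigressive pullbacks by construction.

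For the cocartesian characterization, take $f$ represented as $y\xleftarrow{\phi}u\xrightarrow{\psi}z$ with $\phi$ egressive and $p$-cartesian and $\psi$ ingressive and $p$-cocartesian. For each $n\geq 2$, I need to solve a left-horn extension problem $\Lambda^n_0\hookrightarrow\Delta^n$ in $A^{\eff}(C,C_{\dag},C^{\dag})$ whose $\Delta^{\{0,1\}}$-edge is $f$ and that covers a prescribed $n$-simplex in $A^{\eff}(D,D_{\dag},D^{\dag})$. My plan is to build this simplex in three passes: first, apply hypothesis (\ref{thm:pain}.1) to obtain cocartesian ingressive lifts of the ingressive maps from $p(u)$ in $D$, producing the missing objects $X_{0k}$ together with maps $u\to X_{0k}$; second, use that $\phi$ is $p$-cartesian to fill in the egressive edges $X_{0k}\to X_{1k}$ compatibly with the downstairs data; and third, invoke (\ref{thm:pain}.2) to verify that the resulting squares are ambigressive pullbacks in $C$.

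The main obstacle will be the combinatorial coherence: ensuring that every ambigressive pullback square appearing across the $n$-simplex is filled simultaneously and consistently. Hypothesis (\ref{thm:pain}.2) is precisely the tool that promotes an ambigressive pullback from $D$ to $C$ along a cocartesian ingressive edge, and iterated use of it along the spine of the horn is what drives the induction. The bulk of the work will be a careful case analysis on the structure of $\widetilde{\mathscr{O}}(\Lambda^n_0)^{\op}\subset\widetilde{\mathscr{O}}(\Delta^n)^{\op}$, analogous in spirit to but significantly more elaborate than the manipulations in the proof of Pr. \ref{prp:twarriscat}.
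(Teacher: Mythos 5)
This paper does not actually prove Theorem \ref{thm:pain} --- it is imported wholesale from \cite[Th.~12.2]{M1}, where the argument occupies a substantial stretch of combinatorial work --- so your proposal cannot be measured against an in-paper proof and has to stand on its own. Its overall shape is right: reduce both assertions to lifting problems against inner horns (resp.\ left horns $\Lambda^n_0\subset\Delta^n$ relative to $\Delta^{\{0,1\}}$, for all $n\geq 2$), and translate these into extension problems for diagrams $\widetilde{\mathscr{O}}(\Delta^n)^{\op}\to C$ subject to the ambigressive-pullback conditions. But the two places where you defer the work are exactly where the theorem lives.

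Concretely: (i) in the inner-fibration step you propose to manufacture each missing vertex $X_{ij}$ as an ambigressive pullback supplied by adequacy, asserting that preservation of ambigressive pullbacks by $p$ ``gives compatibility with the downstairs diagram.'' It does not: the pullback you form in $C$ maps under $p$ to \emph{an} ambigressive pullback of the correct cospan in $D$, hence to something equivalent to the prescribed vertex $\overline{X}_{ij}$, whereas a simplicial lifting problem demands an object lying over $\overline{X}_{ij}$ on the nose. An inner fibration need not let you transport along that equivalence (inner fibrations are not isofibrations), so this step fails as stated; the extension has to be organized so that new cells are produced by honest horn-filling against $p$, with the pullback conditions verified afterwards, and arranging this is the point. (ii) Both for the inner horns and for the left horns, you explicitly postpone ``the combinatorial coherence'' and the ``careful case analysis'' of $\widetilde{\mathscr{O}}(\Lambda^n_k)^{\op}\subset\widetilde{\mathscr{O}}(\Delta^n)^{\op}$. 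Exhibiting those inclusions as composites of pushouts of inner/left anodyne maps interleaved with pullback-filling steps, in an order that keeps conditions (\ref{thm:pain}.1) and (\ref{thm:pain}.2) applicable, is not an afterthought --- it is the proof, and it is the part of \cite{M1} that takes real effort. As it stands, the proposal is a plausible plan with one step that is wrong as written and the hard steps left blank.
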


\begin{nul} Observe that the projections
\[\fromto{\widetilde{\mathscr{O}}(\Delta^n)^{\op}}{\Delta^n}\text{\quad and\quad}\fromto{\widetilde{\mathscr{O}}(\Delta^n)^{\op}}{(\Delta^n)^{\op}}\]
induce inclusions
\[\into{C_{\dag}}{A^{\eff}(C,C_{\dag},C^{\dag})}\textrm{\quad and\quad}\into{(C^{\dag})^{\op}}{A^{\eff}(C,C_{\dag},C^{\dag})}.\]
\end{nul}

\begin{cnstr}\label{cnstr:duals} Suppose $S$ an $\infty$-category, and suppose $p\colon\fromto{X}{S}$ an inner fibration. Declare a morphism of $X$ to be ingressive if it is lies over an equivalence of $S$, and declare a morphism of $X$ to be egressive if it is $p$-cartesian. Then the morphism of triples
\[\fromto{(X,X_{\dag},X^{\dag})}{(S,\iota S,S)}\]
satisfies all the conditions of Th. \ref{thm:pain}, whence one has an inner fibration 
\[A^{\eff}(p)\colon\fromto{A^{\eff}(X,X_{\dag},X^{\dag})}{A^{\eff}(S,\iota S,S)}\]
We may now pull this inner fibration back along the equivalence $\into{S^{\op}}{A^{\eff}(S,\iota S,S)}$ to obtain an inner fibration
\[p^{\vee}\colon\fromto{X^{\vee}}{S^{\op}}.\]
This will be called the \emph{right dual of $p$}. The objects of $X^{\vee}$ are the objects of $X$, but an edge $\fromto{x}{y}$ is a span
\begin{equation*}\label{eq:mapfromxtoyinXdual}
\begin{tikzpicture}[baseline]
\matrix(m)[matrix of math nodes, 
row sep=2ex, column sep=3ex, 
text height=1.5ex, text depth=0.25ex] 
{&u&\\ 
x&&y\\}; 
\path[>=stealth,->,inner sep=0.9pt,font=\scriptsize] 
(m-1-2) edge node[above left]{$f$} (m-2-1) 
edge node[above right]{$g$} (m-2-3); 
\end{tikzpicture}
\end{equation*}
of $X$ in which $f$ is a $p$-cartesian edge, and $p(g)$ is a degenerate edge of $S$. This morphism is $p^{\vee}$-cocartesian just in case $g$ is an equivalence.

One can equally well form the \emph{left dual of $p$}, which is the inner fibration
\[((p^{\op})^{\vee})^{\op}\colon\fromto{((X^{\op})^{\vee})^{\op}}{S^{\op}},\]
which, to distinguish it from the right dual, we denote by ${}_{\vee}p\colon\fromto{{}_{\vee}X}{S^{\op}}$. In ${}_{\vee}X$, the objects are again those of $X$, but an edge $\fromto{x}{y}$ is a cospan
\begin{equation*}
\begin{tikzpicture} 
\matrix(m)[matrix of math nodes, 
row sep=2ex, column sep=3ex, 
text height=1.5ex, text depth=0.25ex] 
{&u&\\ 
x&&y\\}; 
\path[>=stealth,<-,inner sep=0.9pt,font=\scriptsize] 
(m-1-2) edge node[above left]{$f$} (m-2-1) 
edge node[above right]{$g$} (m-2-3); 
\end{tikzpicture}
\end{equation*}
of $X$ in which $p(f)$ is a degenerate edge of $S$, and $g$ is $p$-cocartesian. This morphism is ${}_{\vee}p$-cartesian just in case $f$ is an equivalence.

One also has the two opposite duals
\[(p^{\op})^{\vee}=({}_{\vee}p)^{\op}\text{\quad and\quad}(p^{\vee})^{\op}={}_{\vee}(p^{\op}).\]

It is shown in \cite{BGN} that if $p$ is a cartesian fibration classified by a functor $F\colon\fromto{S^{\op}}{\Cat_{\infty}}$, then $p^{\vee}$ is a cocartesian fibration classified by $F$, and of course the opposite dual $(p^{\vee})^{\op}={}_{\vee}(p^{\op})$ is a cartesian fibration classified by $\op\circ F$. Dually, if $p$ is a cocartesian fibration classified by a functor $G\colon\fromto{S}{\Cat_{\infty}}$, then ${}_{\vee}p$ is a cocartesian fibration classified by $G$, and the opposite dual $(p^{\op})^{\vee}=({}_{\vee}p)^{\op}$ is a cocartesian fibration classified by $\op\circ G$.
\end{cnstr}

\section{The fibrewise effective Burnside $\infty$-category} Let $p\colon X \to S$ be a cocartesian fibration of $\infty$-categories in which each fiber admits pullbacks and all the pushforward functors preserve pullbacks. Then the straightening of $p$ is a functor
\[F\colon S \to \Cat_\infty\]
which factors through the subcategory $\Cat_\infty^{\textit{pb}}$ of $\infty$-categories admitting pullbacks and pullback-preserving functors. The effective Burnside category construction defines a functor 
\[A^{\eff}\colon\Cat_\infty^{\textit{pb}} \to \Cat_\infty,\]
and by unstraightening the composite $A^{\eff} \circ F$, we get a cocartesian fibration $q\colon A^{\eff}_S(X) \to S$ such that for any vertex $s \in S$,
\[q^{-1}(s) \simeq A^{\eff}(X_s).\]
Our goal in the next part of this appendix is to to provide a direct construction of $A^{\eff}_S(X)$. The structural support for this will be a homotopy theory of ``marbled simplicial sets," a tiny fragment of an as-yet-unknown generalization of Lurie's theory of categorical patterns \cite[Appendix B]{HA}.

\begin{dfn}
A \emph{marbled simplicial set} is a triple $(S,M,B)$ consisting of a simplicial set $S$ together with
\begin{itemize}
\item a collection $M\subset S_1$ of edges of $S$ -- whose elements will be called the \emph{marked edges} -- that contains all the degenerate edges, and
\item a collection $B\subset\Mor(\Delta^1 \times \Delta^1,S)$ of squares -- whose elements will be called the \emph{blazed squares} -- that contains all constant squares.
\end{itemize}

The category of marbled simplicial sets and maps that preserve the marked edges and the blazed squares will be denoted $s\Set^{\textit{mbl}}$. 
\end{dfn}

\begin{exm} For any simplicial set $S$, we obtain a marbled simplicial set $S^{\sharp \flat}$ in which all edges are marked but only the constant squares are blazed. We will abuse notation slightly and write $s\Set^{\textit{mbl}}_{/S}$ for the category $s\Set^{\textit{mbl}}_{/S^{\sharp \flat}}$.
\end{exm}

\begin{exm} Suppose $p\colon\fromto{X}{S}$ a cocartesian fibration whose fibers $X_s$ all admit pullbacks and whose pushforward functors $\fromto{X_s}{X_t}$ preserve pullbacks. Then one obtains a marbled simplicial set $X^{\natural\natural}$ in which the marked edges are precisely the $p$-cocartesian edges, and the blazed squares are precisely the pullback squares which are contained in the fibers of $p$.
\end{exm}

\begin{dfn} Suppose $p\colon E \to B$ is a morphism of marbled simplicial sets. Then $p$ is called a \emph{marbled fibration} if it is of the form $\fromto{X^{\natural\natural}}{S^{\sharp \flat}}$ for some cocartesian fibration $\fromto{X}{S}$ whose fibers $X_s$ all admit pullbacks and whose pushforward functors $\fromto{X_s}{X_t}$ preserve pullbacks.\footnote{One could define fibrations over a more general marbled base, but we will not need this generality here.}
\end{dfn}

\begin{dfn} An inclusion $i\colon\into{K}{L}$ of marbled simplicial sets is a \emph{marbled trivial cofibration} if for any marbled fibration $p\colon E \to B$ and any solid arrow square
\begin{equation*}
\begin{tikzpicture} 
\matrix(m)[matrix of math nodes, 
row sep=4ex, column sep=4ex, 
text height=1.5ex, text depth=0.25ex] 
{K&E\\ 
L&B,\\}; 
\path[>=stealth,->,font=\scriptsize] 
(m-1-1) edge (m-1-2) 
edge node[left]{$i$} (m-2-1) 
(m-1-2) edge node[right]{$p$} (m-2-2) 
(m-2-1) edge (m-2-2)
(m-2-1) edge[dotted,inner sep=0.5] (m-1-2); 
\end{tikzpicture}
\end{equation*}
a dotted lift exists.
\end{dfn}

\begin{nul} It is natural to expect that, for any simplicial set $S$, there is a model structure on $s\Set^{\textit{mbl}}_{/S}$ whose fibrant objects are the marbled fibrations $\fromto{X^{\natural\natural}}{S^{\sharp\flat}}$ and whose cofibrations are the monomorphisms. We leave such questions to enterprising readers.
\end{nul}

\begin{dfn} Recall that $s\Set^+$ denotes the category of marked simplicial sets. Let
\[F \colon s\Set^+ \to s\Set^{\textit{mbl}}\]
be the unique functor such that
\begin{itemize}
\item $F((\Delta^n)^\flat)$ is the full subcategory of $\twa(\Delta^n)^{\op} \times \Delta^n$ spanned by those triples $((i, j), h)$ for which $0  \leq i \leq j \leq h \leq n$, in which
\begin{itemize}
\item an edge is marked just in case its image in $\twa(\Delta^n)^{\op}$ is constant, and
\item a square is blazed just in case it's spanned by vertices
\[((i_0, j_0), h), ((i_0, j_1), h), ((i_1, j_0), h), ((i_1, j_1), h)\]
where $0 \leq i_1 \leq i_0 \leq j_0 \leq j_1 \leq h \leq n$;
\end{itemize}
\item $F((\Delta^1)^\sharp)$ has the same underlying blazed simplicial set as $F((\Delta^1)^\flat)$, but has all edges marked;
\item $F$ commutes with all colimits.
\end{itemize}

There is, in addition, an obvious natural transformation $\fromto{F}{(-)^{\flat}}$, where $(-)^{\flat}$ is the functor that carries any marked simplicial set to the marbled simplicial set with the same markings in which only the constant squares are blazed. On simplices, the natural transformation is the restriction of the projection $\fromto{\twa(\Delta^n)^{\op}\times\Delta^n}{\Delta^n}$.
\end{dfn}

\begin{nul} Clearly $F((\Delta^0)^\flat)$ is simply $(\Delta^0)^{\sharp\flat}$. The marbled simplicial set $F((\Delta^1)^\flat)$ is the nerve of the category
\begin{equation*}
\begin{tikzpicture}[baseline]
\matrix(m)[matrix of math nodes,
row sep=3ex, column sep=4ex,
text height=1.5ex, text depth=0.25ex]
{000 & & \\
001 & & 111. \\
& 011 & \\};
\path[>=stealth,->,font=\scriptsize]
(m-1-1) edge node[left]{$\sim$} (m-2-1)
(m-3-2) edge (m-2-1)
edge (m-2-3);
\end{tikzpicture}
\end{equation*}
in which (in addition to the degenerate ones) the edge labeled by $\sim$ is marked, and no nonconstant squares is blazed. The marbled simplicial set $F((\Delta^2)^\flat)$ is the nerve of the category
\begin{equation*}
\begin{tikzpicture}[baseline]
\matrix(m)[matrix of math nodes,
row sep=3ex, column sep=4ex,
text height=1.5ex, text depth=0.25ex]
{000 & & \\
001 & & 111 \\
002 & 011 & 112 & & 222 \\
& 012 & & 122 \\
& & 022 & \\};
\path[>=stealth,->,font=\scriptsize]
(m-1-1) edge node[left]{$\sim$} (m-2-1)
(m-2-1) edge node[left]{$\sim$} (m-3-1)
(m-3-2) edge (m-2-1)
edge node[left]{$\sim$} (m-4-2)
edge (m-2-3)
(m-2-3) edge node[right]{$\sim$} (m-3-3)
(m-4-2) edge (m-3-1)
edge (m-3-3)
(m-4-4) edge (m-3-3)
edge (m-3-5)
(m-5-3) edge (m-4-2)
edge (m-4-4);
\end{tikzpicture}
\end{equation*}
in which (in addition to the degenerate ones) the edges labeled by $\sim$ are marked, and (in addition to the constant ones) the square
\begin{equation*}
\begin{tikzpicture}[baseline]
\matrix(m)[matrix of math nodes,
row sep=4ex, column sep=4ex,
text height=1.5ex, text depth=0.25ex]
{022 & 012 \\
012 & 112 \\};
\path[>=stealth,->,font=\scriptsize]
(m-1-1) edge node[above]{} (m-1-2)
edge node[left]{} (m-2-1)
(m-1-2) edge node[right]{} (m-2-2)
(m-2-1) edge node[below]{} (m-2-2);
\end{tikzpicture}
\end{equation*}
is blazed.
\end{nul}

\begin{dfn} Suppose $p\colon\fromto{X}{S}$ a cocartesian fibration whose fibers $X_s$ all admit pullbacks and whose pushforward functors $\fromto{X_s}{X_t}$ preserve pullbacks. We define $A^{\eff}_S(X)$ to be the unique marked simplicial set over $S^{\sharp}$ yielding, for any marked map $\sigma\colon\fromto{K}{S^{\sharp}}$, a bijection
\[\Hom_{s\Set^+_{/S}}(K, A^{\eff}_S(X)) \cong \Hom_{s\Set^{\textit{mbl}}_{/S}}(F(K),X),\]
natural in $\sigma$.
\end{dfn}

If $s\in S_0$, then an $n$-simplex of the fiber $A^{\eff}_S(X)_s$ is a functor $F(\Delta^n)\to X_s$ taking all marked edges to equivalences and all blazed squares to pullback squares. There's an obvious map
\[A^{\eff}_S(X)_s \to A^{\eff}(X_s)\]
given by restriction to $\twa(\Delta^n)^{\op} \times \Delta^{\{n\}}$, and it's a simple matter to see that this map is a trivial Kan fibration. This means that the projection $\rho\colon A^{\eff}_S(X) \to S$ has the desired fibers. What's not clear at this point is whether $\rho$ is an inner fibration or anything like that. In fact, what's true is the following:
\begin{thm} The functor $\rho\colon A^{\eff}_S(X) \to S$ is a cocartesian fibration whose marked edges are precisely the cocartesian edges.
\end{thm}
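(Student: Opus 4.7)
The plan is to adapt the strategy of Proposition~\ref{prp:twarriscat} to the marbled setting. The defining universal property of $A^{\eff}_S(X)$ exhibits an adjunction $F \dashv R$ between $s\Set^+_{/S}$ and $s\Set^{\textit{mbl}}_{/S}$ with $R(X^{\natural\natural}) = A^{\eff}_S(X)$, so a marked monomorphism $K \hookrightarrow L$ has the left lifting property against $\rho$ if and only if $F(K) \hookrightarrow F(L)$ has the left lifting property against the marbled fibration $X^{\natural\natural} \to S^{\sharp\flat}$. Hence, to prove $\rho$ is a fibration in the cocartesian model structure over $S^{\sharp}$, it suffices to show that $F$ carries a generating class of marked anodyne maps to marbled trivial cofibrations.

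The first step is a marbled analogue of Lemma~\ref{lem:JT35leftfibs}: any saturated class of monomorphisms of marbled simplicial sets that satisfies the right cancellation property and contains a short list of generators --- the inner-anodyne generators $s_n$, $i_1$, $i_2$ with trivial blazing, together with small inclusions witnessing both the promotion of an edge to marked status and the completion of a cospan to a blazed pullback square inside a fiber --- must contain every marbled trivial cofibration. The proof should combine the skeletal argument of Lemma~\ref{lem:JT35leftfibs} with a handle-by-handle attachment of the extra marked and blazed data. With this in hand, set $E$ to be the class of monomorphisms $K \hookrightarrow L$ of $s\Set^+$ for which $F(K) \hookrightarrow F(L)$ is a marbled trivial cofibration; because $F$ preserves colimits and marbled trivial cofibrations form a saturated right-cancellable class, so does $E$. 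A Joyal--Tierney-type characterization of marked anodyne maps in $s\Set^+$ then reduces the claim that $E$ contains every marked anodyne map to a verification on a small generating set --- inner-horn inclusions, marked left-horn inclusions of the form $(\Lambda^n_0, \Delta^{\{0,1\}}) \hookrightarrow (\Delta^n, \Delta^{\{0,1\}})$, and saturation maps of the form $(\Delta^1)^\flat \hookrightarrow (\Delta^1)^\sharp$ together with their finite relatives.

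The main obstacle is the explicit combinatorial verification of each generator. In each case, $F$ sends a simplex to the twisted prism $\widetilde{\mathscr{O}}(\Delta^n)^{\op} \times \Delta^n$ cut down to a full subcategory with prescribed markings and blazings, and one must exhibit the target as an iterated pushout of marbled trivial cofibrations attached to the source. The approach mirrors the treatment of $s_n$ in Proposition~\ref{prp:twarriscat}: filter by the $\Delta^n$-coordinate $h$, at each level use the iterated-spine decomposition of $\varepsilon_!(I^n) \hookrightarrow \varepsilon_!(\Delta^n)$ to fill the $\widetilde{\mathscr{O}}(\Delta^n)^{\op}$-direction, and use the model blazed square appearing in $F((\Delta^2)^\flat)$ to assemble the blazing data as one moves between adjacent levels, leaning throughout on the right cancellation property to avoid case-by-case lifting. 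Once $\rho$ is known to be a cocartesian fibration, the identification of its cocartesian edges with the prescribed marked edges of $A^{\eff}_S(X)$ is then essentially tautological: a marked edge is by definition a map $F((\Delta^1)^\sharp) \to X^{\natural\natural}$ over $S$, i.e.\ a span in a single fiber of $p$ together with a cocartesian lift of the base edge, and this matches the criterion for a cocartesian edge in an effective Burnside fibration supplied by Theorem~\ref{thm:pain} under the identification of Construction~\ref{cnstr:duals}.
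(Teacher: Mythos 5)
Your overall architecture --- using the adjunction $F\dashv R$ furnished by the defining universal property of $A^{\eff}_S(X)$ to translate lifting problems for $\rho$ into lifting problems for $\into{F(K)}{F(L)}$ against $\fromto{X^{\natural\natural}}{S^{\sharp\flat}}$, forming the class $E$ of marked monomorphisms whose image under $F$ is a marbled trivial cofibration, and running a Joyal--Tierney right-cancellation argument to reduce to a short list of generators --- is exactly the paper's strategy. But two of your steps go wrong. First, your generating set is incorrect: the ``saturation maps'' $\into{(\Delta^1)^\flat}{(\Delta^1)^\sharp}$ are not marked anodyne (Lurie's fourth generator $\into{K^\flat}{K^\sharp}$ requires $K$ to be a Kan complex, which $\Delta^1$ is not), and, worse, $\into{F((\Delta^1)^\flat)}{F((\Delta^1)^\sharp)}$ is \emph{not} a marbled trivial cofibration: since this map is an isomorphism on underlying simplicial sets and only adds markings, a lift against it would force both legs of the fibrewise span $x'\leftarrow u\to y$ underlying an arbitrary edge of $A^{\eff}_S(X)$ to be equivalences. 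So the verification you propose fails on this ``generator,'' and $\rho$ genuinely lacks the right lifting property against it. The paper never attempts to lift against all marked anodyne maps; it shows (i) that $\rho$ is an inner fibration, by proving $F(s_n)$ is a marbled trivial cofibration (Lm.~\ref{lem:spinemarb}) and invoking Lm.~\ref{lem:JT35}, and (ii) that the marked edges are $\rho$-cocartesian and exist in sufficient supply, by checking $F(i_1)$ and $F(i_2)$ for the marked left horns $\into{l\Lambda^1_0}{l\Delta^1}$ and $\into{l\Lambda^2_0}{l\Delta^2}$ and then rerunning the induction of Lm.~\ref{lem:JT35leftfibs} inside $s\Set^+$ to obtain all $\into{l\Lambda^n_0}{l\Delta^n}$. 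Second, your ``first step'' --- a generators-and-relations description of the entire class of marbled trivial cofibrations --- is both unnecessary and far beyond what the paper establishes: it essentially amounts to constructing the model structure on $s\Set^{\textit{mbl}}_{/S}$ that the paper explicitly declines to construct. All that is needed is that certain \emph{specific} maps are marbled trivial cofibrations.

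Beyond these structural issues, the combinatorial heart of the argument is missing. The paper's Lm.~\ref{lem:spinemarb} exhibits an explicit three-stage factorization of $\into{F(\Delta^{\{0,\dots,n-1\}})\cup F(\Delta^{\{n-1,n\}})}{F(\Delta^n)}$ through full subcategories cut out by inequalities on the indices $(i,j,h)$, each stage visibly a marbled trivial cofibration; your ``filter by the $\Delta^n$-coordinate and use the model blazed square'' is a plausible slogan rather than a proof, and the corresponding explicit checks for $F(i_1)$ and $F(i_2)$ are absent. Finally, the appeal to Th.~\ref{thm:pain} and Cnstr.~\ref{cnstr:duals} to identify the marked edges with the cocartesian ones is a non sequitur: those results concern the effective Burnside category of an adequate triple and the dual of an inner fibration, not $A^{\eff}_S(X)$. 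In the paper that identification comes directly from the lifting property against $\into{l\Lambda^n_0}{l\Delta^n}$ for $n\geq 2$ (Lurie's criterion for an edge of an inner fibration to be cocartesian) together with the supply of marked edges furnished by lifting against $i_1$.
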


The following key lemma isolates most of what we need about the combinatorics of the functor $F$.
\begin{lem} \label{lem:spinemarb} Let 
\[s_n\colon\into{I^{n,\flat} = (\Delta^{\{0,1\}} \cup^{\Delta^{\{1\}}} \cdots \cup^{\Delta^{\{n-1\}}} \Delta^{\{n-1,n\}})^{\flat}}{(\Delta^n)^{\flat}}\]
be the inclusion of the spine of the $n$-simplex. Then $F(s_n)$ is a marbled trivial cofibration.
\begin{proof} We induct on $n$. For $n = 1$, the statement is vacuous, so we are reduced to showing that the inclusion
\[w_n\colon\into{F(\Delta^{\{0, \cdots, n - 1\}}) \cup F(\Delta^{\{n - 1, n\}})}{F(\Delta^n)}\]
is a marbled trivial cofibration. We'll simply factor $w_n$ into a composite of a few maps, each of which is clearly a marbled trivial cofibration, as follows. For a collection of objects $J$ of $F(\Delta^n)$, we'll denote the full subcategory spanned by $J$ by $\angs{J}$. All marblings are inherited from $F(\Delta^n)$ in the following factorization:
\begin{equation*}
\begin{tikzpicture}[baseline]
\matrix(m)[matrix of math nodes,
row sep=4ex, column sep=4ex,
text height=1.5ex, text depth=0.25ex]
{F(\Delta^{\{0, \cdots, n - 1\}}) \cup F(\Delta^{\{n - 1, n\}})\\
\angs{\{((i, j), h) \, | \, i < n - 1 \wedge j < n\}} \cup
\angs{\{((n - 2, n-1), n-1), ((n-1, n- 1), n-1)\}} \cup
F(\Delta^{\{n - 1, n\}})\\
\angs{\{((i, j), h) \, | \, j < n\}} \cup
\angs{\{((i, j), h) \, | \, n -1 \leq i \wedge j\leq n \wedge h \leq n\}}\\
F(\Delta^n).\\};
\path[>=stealth,->,font=\scriptsize]
(m-1-1) edge[right hook->] (m-2-1)
(m-2-1) edge[right hook->] (m-3-1)
(m-3-1) edge[right hook->] (m-4-1);
\end{tikzpicture}
\end{equation*}
It is easy to see that each of these is a marbled trivial cofibration.
\end{proof}
\end{lem}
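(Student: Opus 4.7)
My plan is to induct on $n$. The base case $n = 1$ is vacuous, since $s_1$ is the identity. For $n \geq 2$, I would first observe that the spine decomposes as a pushout $I^{n,\flat} = I^{n-1,\flat} \cup^{(\Delta^{\{n-1\}})^{\flat}} (\Delta^{\{n-1,n\}})^{\flat}$, and that $(\Delta^{n-1})^{\flat} \cup^{(\Delta^{\{n-1\}})^{\flat}} (\Delta^{\{n-1,n\}})^{\flat}$ sits inside $(\Delta^n)^{\flat}$ as a subcomplex. Since $F$ preserves colimits and the marbled trivial cofibrations form a saturated class, the inductive hypothesis reduces the problem to showing that the residual inclusion
\[
w_n\colon F((\Delta^{n-1})^{\flat}) \cup^{F((\Delta^{\{n-1\}})^{\flat})} F((\Delta^{\{n-1,n\}})^{\flat}) \hookrightarrow F((\Delta^n)^{\flat})
\]
is itself a marbled trivial cofibration.

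To attack $w_n$, I would construct a filtration $K_0 \subset K_1 \subset \cdots \subset K_r = F((\Delta^n)^{\flat})$ with $K_0$ the source of $w_n$ and each $K_m$ the full marbled subcategory of $F((\Delta^n)^{\flat})$ on a specified set of vertices $((i,j),h)$. The aim is to arrange that each inclusion $K_{m-1} \hookrightarrow K_m$ is a pushout of one of three \emph{elementary} marbled trivial cofibrations: an inner horn inclusion carrying no markings or blazings (which lifts against any marbled fibration because these are inner fibrations); a left horn inclusion with its initial edge marked (which lifts because the underlying map of a marbled fibration is a cocartesian fibration); or the inclusion of a cospan into a blazed square, whose four vertices all lie over a single vertex of the base (which lifts because the fibers of a marbled fibration admit pullbacks, and pullback squares in those fibers are blazed).

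The natural coarse structure of the filtration, dictated by the projection to $\Delta^n$ and the twisted-arrow poset, proceeds in three stages. In the first stage, one adjoins the handful of vertices near the corner $h = n-1$ needed to glue $F((\Delta^{n-1})^{\flat})$ to $F((\Delta^{\{n-1,n\}})^{\flat})$. In the second, one enlarges to the full subcategory on all vertices $((i,j),h)$ with $j < n$; this adds simplices that extend into the $h = n$ slice along marked edges, and the sub-steps here are built from the cocartesian-lifting pattern. In the third, one fills in the remaining vertices with $j = n$, which all sit in the top fiber $h = n$ and carry the blazed squares of $F((\Delta^n)^{\flat})$; these sub-steps are of the blazed-square-filling type. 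Within each stage, vertices are added a few at a time so that each micro-step becomes a pushout of an elementary trivial cofibration of one of the three types above.

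The main obstacle is the fine-grained bookkeeping needed to verify, at every micro-step, that the newly added simplices together with their inherited markings and blazings genuinely match one of the three elementary patterns. In particular, whenever an adjunction corresponds to a left-horn filling one must check that the relevant initial edge is in fact marked in $F((\Delta^n)^{\flat})$, and whenever a blazed square is added one must confirm that its cospan is already present in the preceding term of the filtration. This is routine but fiddly, and is the only part of the argument where the explicit combinatorial description of $F$ plays a substantive role.
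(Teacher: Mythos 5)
Your proposal is correct and follows essentially the same route as the paper: induction on $n$ reduces everything to the inclusion $w_n$, which is then handled by a filtration through full subcategories of $F(\Delta^n)$ whose three coarse stages (the corner gluing near $h=n-1$, then the vertices with $j<n$, then those with $j=n$) coincide with the paper's explicit three-step factorization. The paper likewise leaves the step-by-step verification as ``easy to see,'' so your identification of the three elementary lifting patterns (inner horns, marked left horns, and blazed-square fillings in a fiber) is an elaboration of the same argument rather than a divergence from it.
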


\begin{ntn} If $P$ is any simplicial subset of $\Delta^n$, then we'll denote by $l P$ the following marked simplicial set:
\begin{itemize}
\item if $P$ does not contain the edge $\Delta^{\{0, 1\}}$, then $lP=P^{\flat}$;
\item if $P$ does not contain the edge $\Delta^{\{0, 1\}}$, then $lP=(P,M)$, where $M=\{\Delta^{\{0, 1\}}\}\cup s_0(P_0)$.
\end{itemize}
\end{ntn}

\begin{lem} The functor $\rho\colon A^{\eff}_B(T) \to B$ is an inner fibration.
\begin{proof} The class of monomorphisms $f\colon X \to Y$ of simplicial sets such that $F(f^\flat)$ is a marbled trivial cofibration is a saturated class of morphisms with the right cancellation property. By Lm. \ref{lem:spinemarb} and the observation above, it contains all the inner anodyne maps.
\end{proof}
\end{lem}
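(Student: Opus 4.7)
The plan is to follow the Joyal--Tierney strategy already deployed for Pr. \ref{prp:twarriscat}, but now in the marbled setting. First, I would translate the statement via the defining adjunction for $A^{\eff}_S(X)$: a lifting problem for $\rho$ against an inclusion $f\colon X \hookrightarrow Y$ of simplicial sets (say, with $Y$ equipped with a map to $B$ and $X$ with its pullback marking) corresponds bijectively, through the natural isomorphism defining $A^{\eff}_S(X)$, to a lifting problem of $F(f^{\flat})$ against the marbled fibration $T^{\natural\natural} \to B^{\sharp\flat}$. Hence $\rho$ being an inner fibration is equivalent to $F((\Lambda^n_k)^{\flat} \hookrightarrow (\Delta^n)^{\flat})$ being a marbled trivial cofibration for every $0 < k < n$; i.e., it is enough to show that the class
\[E \coloneq \{f\colon X \hookrightarrow Y \text{ monomorphism in } s\Set \mid F(f^{\flat}) \text{ is a marbled trivial cofibration}\}\]
contains every inner anodyne map.

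Now I would apply Lm. \ref{lem:JT35}: if $E$ is saturated and satisfies the right cancellation property, then since it contains the spine inclusions $s_n$ by Lm. \ref{lem:spinemarb}, it contains all inner anodyne maps. Saturation is the routine part: $F$ preserves colimits by construction, and marbled trivial cofibrations are saturated as a class defined by a left lifting property, so $E$ is closed under pushouts, retracts, and transfinite composition. The containment of spines is exactly the content of Lm. \ref{lem:spinemarb}, so no further combinatorial work on simplicial sets is needed beyond what has already been done.

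The main obstacle is verifying right cancellation for $E$: given monomorphisms $u\colon X\hookrightarrow Y$ and $v\colon Y\hookrightarrow Z$ with $u,\, v\circ u\in E$, one must show $F(v^{\flat})$ is a marbled trivial cofibration. The natural route is the one flagged by the paper's own example: marbled trivial cofibrations themselves enjoy right cancellation, by virtue of being the trivial cofibrations in (at minimum) a weak factorization structure on $s\Set^{\textit{mbl}}_{/B}$ whose cofibrations are monomorphisms (cf.\ the \emph{Nul} preceding the definition of $F$). Concretely, factor $F(v^{\flat})$ through a marbled fibration and lift against the resulting trivial cofibration using the hypotheses on $F(u^{\flat})$ and $F((vu)^{\flat}) = F(v^{\flat}) \circ F(u^{\flat})$; this is essentially the standard retract argument showing that trivial cofibrations satisfy $2$-out-of-$3$ along monomorphisms. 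Since $F$ sends monomorphisms to monomorphisms, right cancellation transfers from marbled trivial cofibrations to $E$.

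Once these three ingredients are in place, Lm. \ref{lem:JT35} delivers every inner anodyne inclusion to $E$, and the adjunction translation of the first paragraph yields the required lifting property for $\rho$ against inner horns. The only genuinely new combinatorial input sits inside Lm. \ref{lem:spinemarb}; the rest of the argument is a faithful transcription of the Joyal--Tierney template.
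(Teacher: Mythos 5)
Your proposal is correct and follows essentially the same route as the paper: the paper's proof consists precisely of introducing the class $E$ of monomorphisms $f$ for which $F(f^\flat)$ is a marbled trivial cofibration, asserting that $E$ is saturated and satisfies right cancellation, and then invoking Lm.~\ref{lem:spinemarb} together with the Joyal--Tierney criterion of Lm.~\ref{lem:JT35}. The one place where you supply extra detail---justifying right cancellation for marbled trivial cofibrations via a putative (weak factorization or) model structure on $s\Set^{\textit{mbl}}_{/S}$---is exactly the step the paper asserts without argument while explicitly leaving the existence of that model structure to ``enterprising readers,'' so your write-up is no less complete than the original on this point.
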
 

To prove that $\rho$ is a cocartesian fibration, we note that there's certainly a sufficient supply of marked edges in $A^{\eff}_B(T)$, so if we can show that marked edges are cocartesian, then $\rho$ will be a cocartesian fibration. To this end, we first note that the marked anodyne lefn horn inclusions
\[i_1\colon l \Lambda^1_0 \to l \Delta^1\text{\quad and\quad}i_2\colon l \Lambda^2_0 \to l \Delta^2 \]
have the property that $F(i_1)$ and $F(i_2)$ are marbled trivial cofibrations.

Now the desired result follows directly from the following, which is an adaptation of Lms. \ref{lem:JT35} and \ref{lem:JT35leftfibs} for the cocartesian model structure.
\begin{lem} The smallest saturated class $E$ of morphisms of \emph{marked} simplicial sets with the right cancellation property and containing the marked spine inclusions $s_n$ for $n\geq 2$ and the marked left horn inclusions $i_1$ and $i_2$ also contains all left horn inclusions
\[i_n\colon l \Lambda^n_0 \to l \Delta^n.\]
for $n \geq 2$.
\begin{proof} The proof is almost exactly the same as that of Lm. \ref{lem:JT35leftfibs}. First we note that the inclusion $l J^n \to l \Delta^n$ belongs to $E$; the argument is exactly as in Lm. \ref{lem:JT35leftfibs}, except that all simplicial sets are marked via $l$. Furthermore, the inclusion
\[\into{l J^n}{l \Lambda^n_S} \]
lies in $E$ for any $S$ with
\[\{0\} \subseteq S \subsetneqq \{0, 2, 3, \cdots, n\}, \]
again with the argument of Lm. \ref{lem:JT35leftfibs} modified only to mark all simplicial sets via $l$.
\end{proof}
\end{lem}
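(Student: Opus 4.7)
My plan is to mimic the proof of Lemma \ref{lem:JT35leftfibs} essentially verbatim, replacing each simplicial set $P$ appearing there by $lP$ and checking that the markings are compatible at every stage. The two structural ingredients I need are: first, that $\into{lJ^n}{l\Delta^n}$ lies in $E$; second, that for any $S$ with $\{0\} \subseteq S \subsetneqq \{0,2,3,\dots,n\}$ the inclusion $\into{lJ^n}{l\Lambda^n_S}$ lies in $E$. Taking $S=\{0\}$ in the second statement and combining with the first via right cancellation gives the conclusion.

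For the first step, I factor $\into{lJ^n}{l\Delta^n}$ through the intermediate marked simplicial sets $l(\Delta^2\cup^{\Delta^{\{2\}}}\Delta^{\{2,\dots,n\}})$ and $l(I^n\cup^{\Delta^{\{0,1\}}}\Delta^{\{0,1\}})$, exactly as in Lm. \ref{lem:JT35leftfibs}. Each intermediate inclusion is either a pushout of a marked spine $s_m$ or of the marked horn $i_2$. Here the key consistency point is that $l$ depends only on whether the edge $\Delta^{\{0,1\}}$ is present: every subcomplex appearing in the factorization either contains $\Delta^{\{0,1\}}$ (in which case $l$ marks that edge together with the degeneracies) or does not, and in both cases the restriction of $l$ from $\Delta^n$ agrees with $l$ applied to the subcomplex. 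Thus the pushouts are genuine pushouts in $s\Set^+$ and all generators live in the correct markings.

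For the second step, I induct simultaneously on $n$ and on $n-|S|$. The base $n-|S|=1$ is the case $S=\{0,2,\dots,n\}\setminus\{a\}$ with $2\leq a\leq n$. From step one I know that $\into{lJ^n}{l(\Delta^{\{0,1\}}\cup\Delta^{\hat 1})}$ is in $E$, and the identity
\[\Delta^{\hat a}\cap(\Delta^{\{0,1\}}\cup\Delta^{\hat 1})=\Delta^{\{0,1\}}\cup(\Delta^{\hat 1}\cap\Delta^{\hat a})\]
lets me glue $l\Delta^{\hat a}$ onto $l(\Delta^{\{0,1\}}\cup\Delta^{\hat 1})$ via a pushout whose left leg lies in $E$ (as the induction hypothesis on $n$ applies inside $l\Delta^{\hat a}$). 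For the inductive step, I pick $a\notin S$ with $a\neq 1$; since $0\in S$ we automatically have $a\geq 2$, so $\Delta^{\hat a}$ contains $\Delta^{\{0,1\}}$ and the marking $l$ restricted to $\Delta^{\hat a}$ is still $l$. The standard reduction then reduces the claim to showing that $\into{\Delta^{\hat a}\cap l\Lambda^n_{S\cup\{a\}}}{l\Delta^{\hat a}}$ lies in $E$, which is the induction hypothesis applied inside $\Delta^{\hat a}\cong\Delta^{n-1}$ with the smaller subset $S\subsetneqq\{0,2,\dots,n\}\setminus\{a\}$ (where we still have $\{0\}\subseteq S$).

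The main thing to watch is that $a\neq 1$ is essential: if one were forced to pick $a=1$, the face $\Delta^{\hat 1}$ would no longer contain $\Delta^{\{0,1\}}$ and the restriction of $l$ would fail to be $l$ of the subcomplex, so the inductive hypothesis would not be directly applicable. Since the hypothesis $\{0\}\subseteq S$ plus $S\subsetneqq\{0,2,\dots,n\}$ guarantees an $a\neq 0,1$, this obstruction never arises, and the argument of Lm. \ref{lem:JT35leftfibs} goes through unchanged.
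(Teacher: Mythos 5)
Your proposal is correct and follows essentially the same route as the paper, which itself simply defers to the proof of Lemma \ref{lem:JT35leftfibs} with every simplicial set replaced by its $l$-marking; your two structural steps (the inclusion $\into{lJ^n}{l\Delta^n}$ and the double induction for $\into{lJ^n}{l\Lambda^n_S}$) are exactly the two steps the paper cites. The extra observation that the restriction $a\neq 1$ is what keeps the edge $\Delta^{\{0,1\}}$ inside every face used in the induction, so that the marking $l$ restricts consistently, is a worthwhile elaboration of the paper's terse ``modified only to mark all simplicial sets via $l$.''
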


%-------------------------------------------------------------------%
%-------------------------------------------------------------------%
%-------------------------------------------------------------------%

\bibliographystyle{amsplain}
\bibliography{Gcats}

\end{document}